\documentclass[runningheads]{llncs}

\usepackage[T1]{fontenc}
\usepackage{amsmath,amssymb,mathtools}
\usepackage{enumitem}
\usepackage{hyperref}
\usepackage{xcolor}
\usepackage{tikz}

\usetikzlibrary{calc,decorations.pathreplacing,positioning}
\hypersetup{hidelinks}

\tikzset{
  v/.style={circle,draw=black,fill=white,line width=.55pt,
            minimum size=3.4mm,inner sep=0pt},
  dot/.style={circle,fill=black,draw=black,minimum size=2.2mm,inner sep=0pt},
  chosen/.style={circle,draw=black,double,fill=white,line width=.55pt,
                 minimum size=5.0mm,inner sep=0pt},
  edge/.style={draw=black,line width=.55pt},
  faint/.style={draw=black!45,line width=.45pt},
  set/.style={draw=black,dashed,line width=.55pt},
  clique/.style={draw=black,line width=.7pt},
  mathlabel/.style={font=\normalsize,fill=white,inner sep=1.2pt},
  note/.style={font=\footnotesize,align=center},
  smallnote/.style={font=\footnotesize,align=center},
  complete/.style={draw=black,line width=.8pt},
}

\newcommand{\NN}{\mathbb N}

\title{Extremal List Gaps and Inapproximability in Additive Graph Labeling}
\titlerunning{Extremal List Gaps in Additive Graph Labeling}
\author{Arash Ahadi \and Sharareh Alipour}
\authorrunning{A. Ahadi and S. Alipour}
\institute{Tehran Institute for Advanced Studies (TeIAS), Khatam University, Iran\\
 \email{aarash.ahadi.academic@gmail.com} \quad \email{sh.alipour@teias.institute}} 

\begin{document}
\maketitle
\begin{abstract}
We study a vertex-labeling analogue of the $1$-$2$-$3$ problem and its list version.  For a labeling $\ell:V(G)\to\mathbb N$, let $S_\ell(v)=\sum_{w\in N(v)}\ell(w)$.  The additive number $\eta(G)$ is the least $k$ for which there exists $\ell:V(G)\to[k]$ such that $S_\ell(u)\ne S_\ell(v)$ for every $uv\in E(G)$, while the list additive number $\eta_\ell(G)$ is the least $k$ such that the same condition can be satisfied from every assignment of $k$-element lists $L(v)\subset\mathbb N$ with $\ell(v)\in L(v)$.
We show that for every $k\ge2$, there is a graph $G$ with $\eta(G)=1$ and $\eta_\ell(G)\ge k$.  The separation persists at the minimum possible ordinary value for positive-degree regular graphs: there is a regular graph $H$ with $\eta(H)=2$ and $\eta_\ell(H)\ge k$.

We also determine a sharp lower bound for $\eta(G)$ in terms of the order and minimum degree of $G$, and show that the unbounded list gap persists at asymptotically extremal density.

Finally, for every fixed $k\ge2$, it is NP-hard to distinguish $\eta(G)=2$ from $\eta(G)>k$, even on asymptotically extremal dense graphs. Consequently, $\eta(G)$ admits no polynomial-time constant-factor approximation unless $\mathrm P=\mathrm{NP}$.

Together, these results reveal a robust gap phenomenon: the separation between ordinary and list additive labeling persists at the smallest possible ordinary values and even under asymptotically extremal density, while the ordinary parameter itself remains hard to approximate.
\end{abstract}

\section{Introduction}
Proper graph coloring distinguishes adjacent vertices by the labels assigned to them.  In \emph{additive graph labeling}, the labels matter only through their local sums: vertices receive positive integers, and the sums over their open neighborhoods must form a proper coloring.  More precisely, for a graph $G=(V,E)$ and a labeling $\ell:V(G)\to\NN$, write
$$
S_\ell(v)=\sum_{w\in N(v)}\ell(w).
$$
The labeling $\ell$ is an \emph{additive labeling} if $S_\ell(u)\ne S_\ell(v)$ for every edge $uv\in E(G)$.  The \emph{additive number} $\eta(G)$ is the least $k$ for which $G$ has an additive labeling with labels in $[k]=\{1, 2, ..., k\}$.  This model was introduced under the names \emph{lucky labeling} and \emph{lucky number} by Czerwi\'nski, Grytczuk, and \.{Z}elazny~\cite{CGZ09}. 
Upper bounds for $\eta$ have been studied on several graph classes~\cite{BartnickiBosekCzerwinskiGrytczukMateckiZelazny2014,Severin2020}.  
It is shown that computing additive number is NP-hard even on restricted planar instances~\cite{AhadiDehghanKazemiMollaahmadi2012}; moreover, deciding whether $\eta(G)=2$ is NP-complete even for $3$-regular graphs~\cite{DehghanSadeghiAhadi2018}. The inapproximability result of Ahadi and Dehghan~\cite{AhadiDehghan2016}
concerns a different objective, the minimum number of vertices labeled
$1$ in a $(0,1)$-additive labeling, rather than the ordinary additive
number $\eta(G)$ studied here.

Additive labeling is part of the broader study of distinguishing adjacent vertices by locally aggregated weights.  The best-known edge-weighting problem in this area is the $1$-$2$-$3$ Conjecture of Karo\'nski, \L{}uczak, and Thomason~\cite{KLT04}, recently proved by Keusch~\cite{Keu24}.  Its list form raises a basic robustness question: for every graph without an isolated edge, the fixed set $\{1,2,3\}$ suffices, but it remains open whether arbitrary prescribed three-element edge lists suffice, while five-element lists are known to do so~\cite{BartnickiGrytczukNiwczyk2009,Zhu2022}. The same question for vertex labels leads to the list version of additive labeling.

The \emph{list additive number} $\eta_\ell(G)$ is the least $k$ such that every assignment of $k$-element lists $L(v)\subset\NN$ admits an additive labeling with $\ell(v)\in L(v)$ for every vertex.  Akbari et al.~\cite{AkbariGhanbariManaviyatZare2013} began the systematic study of this parameter and proved the general bound $\eta_\ell(G)\le\Delta(G)^2-\Delta(G)+1$ for $\Delta(G)\ge2$. Further bounds for planar graphs are known, both under girth assumptions and in terms of maximum degree~\cite{BrandtTenpasYerger2020,LaiLih2022}. More recently, Gossett~\cite{Gossett2024} developed an Alon--Tarsi-type theorem for additive list coloring via graph orientations.
Ahadi and Dehghan~\cite{AhadiDehghan2016} proved that for every integer
$k\ge1$ there exists a graph $G$ such that
$$
  \eta(G)\le k\le \frac{\eta_\ell(G)}{2}.
$$
Thus the difference between the ordinary and list parameters can be
arbitrarily large, but their theorem does not keep the ordinary parameter
fixed as the gap grows. We show that the gap is unbounded even when the ordinary additive number
is fixed at one, and also when it is fixed at two for positive-degree
regular graphs.

In Section~\ref{2}, for every $k\ge2$, we construct a graph $G_k$ with
$$
  \eta(G_k)=1,
  \qquad
  \eta_\ell(G_k)\ge k.
$$
This proves that no function of the ordinary additive number can bound its list analogue. 

The gap persists even for regular graphs. 
For positive-degree regular graphs, the smallest possible additive number
is two.  For every $k\ge2$, we construct
$$
\text{a regular graph } H_k \text{ with}
\qquad
  \eta(H_k)=2,
  \qquad
  \eta_\ell(H_k)\ge k.
$$
All constructions are explicit.

We next give a sharp minimum-degree bound for graphs with a prescribed
additive number. For a graph $G$ of order $n$ and minimum degree $\delta$, we prove the sharp bound
$$
  \eta(G)\ge
  \frac{n}{(n-\delta)^2}
  +\frac12-\frac{1}{2(n-\delta)}.
$$
The bound is attained for every prescribed value of $\eta(G)$ and of $n-\delta$.  Moreover, any prescribed graph can be preserved as an induced subgraph of arbitrarily large asymptotically extremal graphs, with the same additive number and no smaller list additive number.

We also obtain an algorithmic consequence.  Section~\ref{sec:hardness} proves that, for every fixed $k\ge2$, it is NP-hard to distinguish $\eta(G)=2$ from $\eta(G)>k$. Consequently, unless $\mathrm P=\mathrm{NP}$, the additive number admits no polynomial-time constant-factor approximation; this strengthens the known hardness picture~\cite{AhadiDehghanKazemiMollaahmadi2012,DehghanSadeghiAhadi2018}.

\paragraph{Notation.}
All graphs are finite and simple.  We write $\overline G$ for the complement of $G$, and $\delta(G)$ and $\Delta(G)$ for its minimum degree and maximum degree, respectively.

\section{Unbounded list gaps at the minimum ordinary values} \label{2}

We begin by showing that the list additive number is unbounded even when the ordinary parameter is at its absolute minimum.  We then prove the analogous statement for positive-degree regular graphs, where the minimum ordinary value is two.

\begin{theorem}\label{thm:list-gap}
For every integer $k\ge2$, the following hold.
\begin{enumerate}[label=(\roman*)]
\item There exists a graph $G_k$ such that $\eta(G_k)=1$ and $\eta_\ell(G_k)\ge k$.
\item There exists a regular graph $H_k$ such that $\eta(H_k)=2$ and $\eta_\ell(H_k)\ge k$.
\end{enumerate}
\end{theorem}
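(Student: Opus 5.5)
The plan is to reduce $\eta(G)=1$ to a degree condition and then build $G_k$ by a two-level pigeonhole construction. For part (i), note that $\eta(G)=1$ means the all-ones labeling is additive. Since $S_{\mathbf 1}(v)=\deg(v)$, this holds exactly when adjacent vertices have distinct degrees. So the two requirements on $G_k$ are: (a) every edge joins vertices of different degrees; (b) there is an assignment of $(k-1)$-element lists admitting no additive labeling. For an edge $uv$ I will use
$$S_\ell(u)-S_\ell(v)=\ell(v)-\ell(u)+\ell(N(u)\setminus N[v])-\ell(N(v)\setminus N[u]).$$
Gadgets will be built so that this difference depends only on a few "core" labels and on the labels of $u,v$. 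All lists will be taken to be (scaled copies of) $\{1,\dots,k-1\}$.

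\emph{Core level.} Take a set $Q$ of $k$ core vertices, all with list $[k-1]$. By pigeonhole, any labeling gives two core vertices $p,q$ the same label. The rest of the graph must guarantee a conflict in every such situation. So for each pair $\{p,q\}\subseteq Q$ I attach a gadget $W_{pq}$ with two requirements. First, the only private neighbours of its adjacent vertices should be $p$ versus $q$, together with fixed "balancing" vertices. Second, if $\ell(p)=\ell(q)$, pigeonhole on the labels inside $W_{pq}$ must produce an edge $xy$ with $\ell(x)=\ell(y)$ and all other contributions cancelling. A first attempt: let $W_{pq}$ consist of two classes $X\sim p$ and $Y\sim q$ of equal size, with $X$--$Y$ edges and common neighbours otherwise. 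The adversary's lists cannot prevent a collision if $|X|,|Y|$ are large and the $X$--$Y$ adjacency is complete.

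\emph{Degree asymmetry.} Condition (a) forbids the symmetric gadget, since $x$ and $y$ would have equal degrees. I will therefore give each $x\in X$ a fixed number $t$ of extra private neighbours of degree $1$ (leaves). Their labels shift the difference by the sum of $t$ list values. The adversary chooses the leaves' lists, and the $y$-side lists, so that this shift is absorbed. Concretely, scale the lists of $Y$ by a translate so that $\ell(y)-\ell(x)$ ranges over exactly the values needed to cancel the leaf contributions, and then pigeonhole on the pair (label of $x$, leaf sum).

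I expect this to be the \textbf{main obstacle}: making the gadget asymmetric in degree while keeping the difference formula forced to zero for some edge whatever labels are chosen. If the leaf trick fails, I would instead make $Q$ and the gadgets depend on all $(k-1)^{|Q|}$ labelings of a core. For each core labeling, I would add a dedicated pair-gadget whose lists are shifted so that it is conflicting exactly under that labeling, and pad degrees with a global degree-separating structure (e.g.\ attaching cliques of distinct sizes) to secure (a). After the construction I check (a) directly by computing degrees level by level.

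For part (ii), I start from the graph $G_k$ of part (i) and embed it into a regular graph. Take enough disjoint copies of $G_k$ and complete them to a $D$-regular graph using new filler vertices. The fillers get fixed large lists, e.g.\ singletons multiplied by a huge constant $M$ repeated $k-1$ times; equivalently, a list of $k-1$ multiples of $M$ chosen so their contribution is constant modulo $M$. This keeps the obstruction in each copy intact, so $\eta_\ell(H_k)\ge k$. For $\eta(H_k)=2$, the lower bound holds because regular graphs of positive degree have $\eta\ge2$. For the upper bound I exhibit an explicit $\{1,2\}$-labeling, assigning label $2$ to a carefully chosen set so that sums separate the degree classes of the original copies. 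I expect this explicit $2$-labeling to need the filler structure to be designed with it in mind from the start.
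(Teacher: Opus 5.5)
Your reduction of $\eta(G)=1$ to ``adjacent vertices have distinct degrees'' is correct, and a pigeonhole on a clique is the right kind of obstruction. But there is a genuine gap exactly where you flag the main obstacle: no gadget reconciling the two is built, and the leaf trick cannot build one. All vertices of $X$ have the same degree, and likewise for $Y$, so condition (a) forces $X$ and $Y$ to be independent. Then for $x\in X$, $y\in Y$ you get $S_\ell(x)-S_\ell(y)=\ell(p)-\ell(q)+\ell(Y)-\ell(X)+\lambda_x$, where $\lambda_x$ is the leaf sum of $x$. The labels $\ell(x),\ell(y)$ have dropped out, so there is nothing to pigeonhole. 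The labeler simply chooses the private leaves of each $x$ so that $\lambda_x$ avoids the one forbidden value; $\lambda_x$ takes at least $t(k-2)+1\ge2$ values for $k\ge3$. No translate of the $Y$-lists can absorb a shift that the labeler picks independently at each vertex.

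The missing idea is that degree separation must come from \emph{shared} vertices whose label contributions can be enumerated. The paper does this as follows:
\begin{itemize}
\item It uses selector sets $S_j$ with $|S_j|=2^j$ and lists $[r]$.
\item It builds a clique $C_{\mathbf x}$ on $r+1$ vertices for \emph{every} possible vector $\mathbf x$ of selector sums.
\item It joins $c^i_{\mathbf x}$ to $S_j$ exactly when bit $j$ of $i$ is $1$, so clique degrees are $r,\dots,2r$ and selectors have huge degree.
\item It gives $c^i_{\mathbf x}$ the list $B_i(\mathbf x)+[r]$.
\end{itemize}
Consider the clique indexed by the actual selector sums. Writing $\ell(c^i)=B_i(\mathbf x)+z_i$ and $Q$ for the clique's total label, one gets $S_\ell(c^i)=Q-z_i$ with $z_i\in[r]$, and pigeonhole on $r+1$ vertices finishes. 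Your fallback (one gadget per core labeling, with shifted lists) is this idea in embryo. What it misses is that the core itself must do the degree separation. An additional ``degree-separating structure'' such as attached cliques reintroduces label contributions that are not among the enumerated quantities.

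Part (ii) fails for the same reason. In a regular supergraph, adjacent vertices $u,v$ of a copy of $G_k$ have different degrees in $G_k$, hence different numbers of filler neighbours. So filler labels enter $S_\ell(u)-S_\ell(v)$ asymmetrically, and the labeler chooses them. Lists of multiples of $M$ only make these contributions agree modulo $M$, while a conflict requires exact equality; indeed the labeler can make the difference a nonzero multiple of $M$. So the claim that the obstruction stays intact is unfounded, and the $\{1,2\}$-labeling is never supplied.

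The paper avoids free fillers altogether:
\begin{itemize}
\item It pads $J$ (the selector--clique graph, now indexed by $\mathbf y\in\mathcal Y$) with isolated vertices until $N-1>2\Delta(J)$.
\item It takes two copies $J^\alpha,J^\beta$ and joins $u^\alpha v^\beta$ exactly when $u\ne v$ and $uv\notin E(J)$. This makes the graph $(N-1)$-regular.
\item Labeling $J^\alpha$ by $1$ and $J^\beta$ by $2$ gives sums $2(N-1)-d_J(v)$ and $N-1+d_J(v)$, which separate every edge.
\end{itemize}
For the list bound, let $P$ be the total label of $J^\beta$, $Q^\beta$ that of the $\beta$-copy of the clique, and $\sigma_j^\gamma$ the label sum of the $\gamma$-copy of $S_j$. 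The cross edges contribute $P-Q^\beta-\sum_j\varepsilon_j(i)\sigma_j^\beta$ to $S_\ell(c^{i,\alpha})$. This is absorbed by indexing the cliques by the differences $y_j=\sigma_j^\alpha-\sigma_j^\beta\in Y_j$ and shifting the $\alpha$-lists to $r(r-1)+B_i(\mathbf y)+[r]$.
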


\begin{proof}
For (i), put 
$$r=\max\{2,k-1\}, \qquad t=\lceil\log_2(r+1)\rceil.$$  
Create pairwise disjoint sets $S_0,\ldots,S_{t-1}$ of new vertices, where $|S_j|=2^j$; call these vertices \emph{selectors}, and put $X_j=\{2^j,2^j+1,\ldots,r2^j\}$, for every $0 \leq j \leq t-1$.

For $0\le i\le r$, let
$$
  i=\sum_{j=0}^{t-1}\varepsilon_j(i)2^j,
  \qquad \varepsilon_j(i)\in\{0,1\},
$$
be the binary representation of $i$.  For every vector $\mathbf x=(x_0,\ldots,x_{t-1})\in X_0\times\cdots\times X_{t-1}$, create a clique
\[
  C_{\mathbf x}=\{c^i_{\mathbf x}\,|\,0\le i\le r\}
\]
with $r+1$ new vertices.  For every $j$, join all vertices $c^i_{\mathbf x}$ with $\varepsilon_j(i)=1$ to all vertices of $S_j$, and add no other edges.  

\begin{figure}[!t]
\centering
\vspace{0mm}
\begin{tikzpicture}[x=1cm,y=1cm,scale=0.73,transform shape]

\begin{scope}[yshift=0.45cm]

\draw[set] (-7.1,-2.95) circle [radius=1.0cm];
\node[mathlabel,anchor=south] at (-7.1,-1.85) {$S_0$};
\node[v] (s0) at (-7.1,-2.95) {};
\node[mathlabel,anchor=east] at (-7.43,-2.95) {$s$};

\draw[set] (-7.1,-5.95) circle [radius=1.0cm];
\node[mathlabel,anchor=south] at (-7.1,-4.88) {$S_1$};
\node[v] (s1a) at (-7.1,-5.60) {};
\node[v] (s1b) at (-7.1,-6.28) {};
\node[mathlabel,anchor=east] at (-7.43,-5.60) {$s'$};
\node[mathlabel,anchor=east] at (-7.43,-6.28) {$s''$};

\draw[decorate,decoration={brace,mirror,amplitude=6pt},line width=.5pt]
  (-8.55,-1.95) -- (-8.55,-6.95)
  node[midway,xshift=-1.0cm,note]{selectors};

\draw[set] (-2.10,-4.60) circle [radius=2.45cm];
\node[mathlabel,anchor=south] at (-2.10,-2.12) {$C_{\mathbf{x}}\cong K_4$};
\node[v] (c1) at (-2.95,-3.15) {};
\node[v] (c2) at (-2.95,-6.05) {};
\node[v] (c0) at (-0.40,-4.60) {};
\node[v] (c3) at (-2.10,-4.60) {};
\node[mathlabel,anchor=south east] at (-3.10,-3.00) {$c^1_{\mathbf{x}}$};
\node[mathlabel,anchor=north east] at (-3.10,-6.17) {$c^2_{\mathbf{x}}$};
\node[mathlabel,anchor=south] at (0.00,-4.25) {$c^0_{\mathbf{x}}$};
\node[mathlabel,anchor=south] at (-2.10,-4.30) {$c^3_{\mathbf{x}}$};
\foreach \a/\b in {c1/c2,c1/c0,c2/c0,c3/c1,c3/c2,c3/c0}
  \draw[edge] (\a)--(\b);

\draw[edge] (s0) to[bend left=5] (c1);
\draw[edge] (s0) to[bend left=12] (c3);
\draw[edge] (s1a) to[bend right=4] (c2);
\draw[edge] (s1a) to[bend right=10] (c3);
\draw[edge] (s1b) to[bend right=7] (c2);
\draw[edge] (s1b) to[bend right=13] (c3);

\node[note,anchor=west] at (1.35,-4.60)
  {$\mathbf{x}\in\{1,2,3\}\times\{2,3,4,5,6\}$};

\end{scope}

\end{tikzpicture}
\vspace{-3mm}
\caption{The construction of $G_4$, with $k=4$, $r=3$, and $t=2$. For every $\mathbf{x}\in\{1,2,3\}\times\{2,3,4,5,6\}$, there is a $4$-vertex clique $C_{\mathbf{x}}$ of the form shown in the figure. Hence $G_4$ has $15\cdot4+3=63$ vertices.}
\label{fig:section2-graph}
\vspace{-2mm}
\end{figure}

We first prove that $\eta(G_k)=1$.  Each $c^i_{\mathbf x}$ has $r$ neighbors in its clique and exactly $i$ selector neighbors, so $d(c^i_{\mathbf x})=r+i$.  Thus the clique vertices have degrees $r,r+1,\ldots,2r$.

Every selector has at least one neighbor in every clique.  The number of cliques is
\[
  M=\prod_{j=0}^{t-1}\bigl((r-1)2^j+1\bigr),
\]
so every selector has degree at least $M$.  Since $t\ge2$ and $r\ge2$, we have $M\ge r(2r-1)>2r$.  Hence every selector has larger degree than every clique vertex. Consequently, the endpoints of every edge have different degrees. Therefore the all-one labeling is an additive labeling, so $\eta(G_k)=1$.

It remains to show that $\eta_\ell(G_k)>r$.
Define the lists 
\[
\begin{aligned}
L(s) &= [r],
&& \text{for every selector } s,\\
L(c^i_{\mathbf x}) &= B_i(\mathbf x)+[r],
&& \text{for every clique vertex } c^i_{\mathbf x}.
\end{aligned}
\]
where
\[
B_i(\mathbf x)=\sum_{j=0}^{t-1}\varepsilon_j(i)x_j.
\]
To the contrary, suppose that an additive labeling $\ell$ were chosen from these lists.  For each $j$, let $\sigma_j=\sum_{s\in S_j}\ell(s)$. Since $|S_j|=2^j$ and all selector labels lie in $[r]$, we have $\sigma_j\in X_j$. Hence $\mathbf x^*=(\sigma_0,\ldots,\sigma_{t-1})$ indexes one of the cliques in the construction.

Consider $C_{\mathbf x^*}$. By the definition of the lists, for every $0\le i\le r$, there exists $z_i\in[r]$ such that \begin{equation}\label{eq:clique-label}
  \ell(c^i_{\mathbf x^*})=B_i(\mathbf x^*)+z_i.
\end{equation}

Put $Q=\sum_{h=0}^r\ell(c^h_{\mathbf x^*})$. 
By construction, $c^i_{\mathbf x^*}$ is adjacent to every vertex of $S_j$
exactly when $\varepsilon_j(i)=1$. Therefore the sum of the labels of its
selector neighbors is
\[
  \sum_{j=0}^{t-1}\Big(\varepsilon_j(i)
  \sum_{s\in S_j}\ell(s)\Big)
  =
  \sum_{j=0}^{t-1}\varepsilon_j(i)\sigma_j
  =
  B_i(\mathbf x^*).
\]
 Hence
 \[
S_\ell(c^i_{\mathbf x^*})
=
\sum_{h\ne i}\ell(c^h_{\mathbf x^*})+
\sum_{\substack{s\text{ is a selector and }\\  s\in N(c^i_{\mathbf x^*})}}\ell(s)
=\bigl(Q-\ell(c^i_{\mathbf x^*})\bigr)+
B_i(\mathbf x^*)
=
Q-z_i,
\]
the last equality holds by (\ref{eq:clique-label}).

Thus the $r+1$ vertices of $C_{\mathbf x^*}$ can have pairwise distinct neighbor-sums only if $z_0,\ldots,z_r$ are pairwise distinct, which is impossible because they all belong to the $r$-element set $[r]$.  Hence this $r$-list assignment admits no additive labeling.  Therefore $\eta_\ell(G_k)>r\ge k-1$, and hence $\eta_\ell(G_k)\ge k$.

For (ii), retain $r,t$ and the binary digits $\varepsilon_j(i)$ from part (i), and take selector sets $S_0,\ldots,S_{t-1}$ with $|S_j|=2^j$ as before.  For
$0\le j\le t-1$, put \[
Y_j=\{-(r-1)2^j,\ldots,(r-1)2^j\},
\]
and let $\mathcal Y=Y_0\times\cdots\times Y_{t-1}$.
As in part (i), for $\mathbf y=(y_0,\ldots,y_{t-1})\in\mathcal Y$, write
$$
  B_i(\mathbf y)=\sum_{j=0}^{t-1}\varepsilon_j(i)y_j;
$$
and create a clique $C_{\mathbf y}=\{c^0_{\mathbf y},\ldots,c^r_{\mathbf y}\}$
with $r+1$ new vertices, and exactly as in part (i), join $c^i_{\mathbf y}$ to
every vertex of $S_j$ whenever $\varepsilon_j(i)=1$.  Call the
resulting graph $J$.

As in part (i), every clique vertex $c^i_{\mathbf y}$ has degree
$r+i$, while every selector has degree greater than $2r$.  Thus the
endpoints of every edge of $J$ have different degrees.  Add sufficiently
many isolated vertices to $J$, if necessary, so that, writing
$N=|V(J)|$,
$$
  N-1>2\Delta(J).
$$
This does not change any degree of a nonisolated vertex.

Take two disjoint copies $J^\alpha$ and $J^\beta$ of $J$.  In addition to the
edges inside the two copies, join $u^\alpha$ to $v^\beta$ precisely when
$$
  u\ne v
  \qquad\text{and}\qquad
  uv\notin E(J).
$$
Call the resulting graph $H_k$.  For $\gamma\in\{\alpha,\beta\}$,
$$
  d_{H_k}(v^\gamma)
  =d_J(v)+\big(N-1-d_J(v)\big)
  =N-1.
$$
Hence $H_k$ is $(N-1)$-regular.

We next prove that $\eta(H_k)=2$.  Label every vertex of $J^\alpha$ by $1$
and every vertex of $J^\beta$ by $2$.  For $v\in V(J)$,
$$
  S(v^\alpha)=2(N-1)-d_J(v),
  \qquad
  S(v^\beta)=N-1+d_J(v).
$$
If $uv\in E(J)$, then $d_J(u)\ne d_J(v)$, so every edge lying inside
one of the two copies is distinguished.  Moreover, if $u^\alpha v^\beta$ is a cross edge,
then
$$
  S(u^\alpha)-S(v^\beta)
  =
  N-1-d_J(u)-d_J(v)
  >0,
$$
since $N-1>2\Delta(J)$.  Thus, this is an additive labeling with labels
$1$ and $2$.  Since $H_k$ is regular and has edges, the all-one labeling is
not additive, and therefore
$$
  \eta(H_k)=2.
$$
It remains to prove that $\eta_\ell(H_k)>r$.  Give every vertex of
$H_k$ the list $[r]$, except that for
$\mathbf y\in\mathcal Y$ and $0\le i\le r$, assign
$$
  L(c^{i,\alpha}_{\mathbf y})
  =
  r(r-1)+B_i(\mathbf y)+[r],
$$
where $c^{i,\alpha}_{\mathbf y}$ is the vertex $c^i_{\mathbf y}$ in $J^\alpha$.  These are lists of positive integers, since
$$
  B_i(\mathbf y)\ge -(r-1)i\ge-r(r-1).
$$
Suppose, for a contradiction, that an additive labeling $\ell$ is
chosen from these lists.  For each $j$, put
$$
  y_j^*
  =
  \sum_{s\in S_j}\ell(s^\alpha)
  -
  \sum_{s\in S_j}\ell(s^\beta),
$$
where, for $\gamma\in\{\alpha,\beta\}$, $s^\gamma$ denotes the copy of $s$ in $J^\gamma$. Since $|S_j|=2^j$ and all selector labels belong to $[r]$,
$$
  -(r-1)2^j\le y_j^*\le(r-1)2^j,
$$
and hence
$$
  \mathbf y^*=(y_0^*,\ldots,y_{t-1}^*)\in\mathcal Y.
$$
Therefore the clique $C_{\mathbf y^*}$ occurs in the construction, and hence
$C_{\mathbf y^*}^\alpha$ occurs in $J^\alpha$.

For every $0\le i\le r$, write
$$
  \ell(c^{i,\alpha}_{\mathbf y^*})
  =
  r(r-1)+B_i(\mathbf y^*)+z_i,
  \qquad z_i\in[r].
$$
Put
$$
  Q^\gamma=\sum_{h=0}^r\ell(c^{h,\gamma}_{\mathbf y^*}), \qquad P=\sum_{v\in V(J)}\ell(v^\beta),
$$
for $\gamma\in\{\alpha,\beta\}$.

The internal clique contribution to
$S_\ell(c^{i,\alpha}_{\mathbf y^*})$ is
$Q^\alpha-\ell(c^{i,\alpha}_{\mathbf y^*})$.  By the definition of the cross edges, the contribution from
$J^\beta$ is
$$
  P-Q^\beta-
  \sum_{j=0}^{t-1}\varepsilon_j(i)
  \sum_{s\in S_j}\ell(s^\beta).
$$
Together with the selector contribution from $J^\alpha$, this gives
$P-Q^\beta+B_i(\mathbf y^*)$.  Hence
$$
\begin{aligned}
  S_\ell(c^{i,\alpha}_{\mathbf y^*})
  &=
  Q^\alpha-\ell(c^{i,\alpha}_{\mathbf y^*})
  +B_i(\mathbf y^*)+P-Q^\beta\\
  &=
  Q^\alpha+P-Q^\beta-r(r-1)-z_i.
\end{aligned}
$$
Thus the $r+1$ vertices of $C_{\mathbf y^*}^\alpha$ can have pairwise
distinct neighbor-sums only if $z_0,\ldots,z_r$ are pairwise distinct,
which is impossible because they all belong to the $r$-element set
$[r]$.  Hence $\eta_\ell(H_k)>r\ge k-1$,
and therefore $\eta_\ell(H_k)\ge k$.
\qed \end{proof}

As an immediate consequence of Theorem~\ref{thm:list-gap}(i) and Theorem~\ref{thm:dense-lift}, we obtain the following.
\begin{corollary}\label{cor:all-smaller-lucky}
For every pair of integers $a\ge1$ and $b\ge2$, there are arbitrarily
large graphs $G$ of order $n$ such that
\[
  \eta(G)=a,\qquad \eta_\ell(G)\ge b,
\qquad
\text{and }
\quad
  \delta(G)
  =n-\sqrt{\frac{2n}{2a-1}}+O(1).
\]
\end{corollary}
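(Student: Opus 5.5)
The corollary follows by combining Theorem~\ref{thm:list-gap}(i) with Theorem~\ref{thm:dense-lift}. We do not yet have the exact statement of the latter, so we rely on its description in the introduction: any prescribed graph can be preserved as an induced subgraph of arbitrarily large asymptotically extremal graphs, with the same additive number and no smaller list additive number. We read ``asymptotically extremal'' through the sharp bound $\eta(G)\ge \frac{n}{(n-\delta)^2}+\frac12-\frac{1}{2(n-\delta)}$. Writing $m=n-\delta$ and $\eta(G)=a$, extremality gives $n\approx (a-\tfrac12)m^2+\tfrac m2$. Hence $m=\sqrt{2n/(2a-1)}+O(1)$, which is exactly the stated form $\delta=n-\sqrt{2n/(2a-1)}+O(1)$.

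The plan has three steps.

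\textbf{Step 1: a base graph with the right parameters.} Given $a\ge1$ and $b\ge2$, we need a base graph $F$ with $\eta(F)=a$ exactly and $\eta_\ell(F)\ge b$. Theorem~\ref{thm:list-gap}(i) supplies $G_b$ with $\eta(G_b)=1$ and $\eta_\ell(G_b)\ge b$.
\begin{itemize}[label=\textbullet]
\item If $a=1$, take $F=G_b$.
\item If $a\ge2$, first check whether Theorem~\ref{thm:dense-lift} already lets us prescribe the additive number of the lift, rather than merely preserve it. The introduction's claim that the extremal bound ``is attained for every prescribed value of $\eta(G)$'' suggests it may.
\item If it does not, take the disjoint union $F=G_b\cup K$, where $K$ is any graph with $\eta(K)=a$. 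The additive number of a disjoint union is the maximum over its components, and list additive number is monotone under taking components (restrict the bad list assignment). So $\eta(F)=\max(1,a)=a$ and $\eta_\ell(F)\ge\eta_\ell(G_b)\ge b$.
\end{itemize}
For $K$, the complete graph $K_a$ is natural. In a complete graph, adjacent vertices have distinct sums exactly when the labels are pairwise distinct, so $\eta(K_a)=a$.

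\textbf{Step 2: lift to dense graphs.} Apply Theorem~\ref{thm:dense-lift} to $F$. This yields arbitrarily large graphs $G$ of order $n$ with $\eta(G)=a$ and $\eta_\ell(G)\ge\eta_\ell(F)\ge b$, whose minimum degree is asymptotically extremal for $a$.

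\textbf{Step 3: the degree asymptotics.} It remains to translate ``asymptotically extremal'' into the displayed formula, by solving the quadratic $(a-\tfrac12)m^2+\tfrac m2-n=O(m)$ for $m$. If the theorem gives $n-\delta$ only up to an additive $O(1)$, this is routine.

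The main obstacle is matching hypotheses in Step~1. We must make sure Theorem~\ref{thm:dense-lift} yields additive number exactly $a$ together with the extremal density for that same $a$, rather than density tuned to $\eta(F)$ for some other value. The disjoint-union device handles this, provided the lift theorem accepts arbitrary (possibly disconnected) input graphs. If it requires connected inputs, the fallback is to join the components by a long path of high-degree-distinguishing vertices. One then rechecks that $\eta$ is still $a$, using that $K_a$ forces at least $a$ labels while $G_b$ needs only one.
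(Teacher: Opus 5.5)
Your proposal is correct and matches the paper's proof: take $H=G_b$ if $a=1$ and $H=G_b\cup K_a$ if $a\ge2$, note that $\eta(H)=a$ and $\eta_\ell(H)\ge b$, and apply Theorem~\ref{thm:dense-lift}. Your hedges are unnecessary. That theorem accepts an arbitrary (possibly disconnected) seed graph with $\eta(H)=a$ and states $\delta(G)=n-\sqrt{2n/(2a-1)}+O_H(1)$ directly, so you need neither the connectivity fallback nor the Step~3 computation.
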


\begin{proof}
Let $G_b$ be the graph from Theorem~\ref{thm:list-gap}(i).  If $a=1$,
put $H=G_b$.  If $a\ge2$, put $H=G_b\cup K_a$.  The additive number of a
disjoint union is the maximum of its values on the components, so
$\eta(H)=a$, while $\eta_\ell(H)\ge\eta_\ell(G_b)\ge b$.  Apply
Theorem~\ref{thm:dense-lift} to $H$ with $\eta(H)=a$.
\qed \end{proof}

\section{Exact Extremal density}
\label{sec:dense}
To study how dense such examples can be, we first determine the exact
complement-degree extremal bound for graphs with prescribed additive number.

\begin{theorem}[Sharp complement-degree bound]\label{thm:complement-degree-bound}
Let $G$ be a graph of order $n$ and minimum degree $\delta$. Then
\[
  \eta(G)\ge
  \frac{n}{(n-\delta)^2}
  +\frac12-\frac{1}{2(n-\delta)}.
\]
Moreover, the bound is sharp: for every pair of integers $a\ge1$ and
$b\ge1$, there exists a graph $G$ with $\eta(G)=a$ and
$n-\delta=b$ attaining equality.
\end{theorem}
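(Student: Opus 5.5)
Write $b=n-\delta$ and $k=\eta(G)$. The claimed inequality is equivalent to
\[
  n\le b^2\Bigl(k-\tfrac12\Bigr)+\tfrac b2=\frac{b\bigl((2k-1)b+1\bigr)}{2},
\]
and I would prove it in this form by counting color classes of the proper coloring $v\mapsto S_\ell(v)$.

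\textbf{Setup.} Fix an additive labeling $\ell:V(G)\to[k]$ and put $T=\sum_{v}\ell(v)$. For a vertex $v$, let $\overline N[v]$ be the closed non-neighborhood of $v$, that is, $v$ together with its non-neighbors. Then
\[
  S_\ell(v)=T-s(v),\qquad s(v)=\sum_{w\in\overline N[v]}\ell(w).
\]
Since $d(v)\ge\delta$, we have $|\overline N[v]|\le b$. Hence $1\le s(v)\le kb$.

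\textbf{Bounding each class.} Vertices with equal $S_\ell$ have equal $s$. By additivity they are pairwise non-adjacent, so each class $A_\sigma=\{v: s(v)=\sigma\}$ is an independent set of $G$. Two consequences follow.
\begin{itemize}
\item For any $v\in A_\sigma$, every vertex of $A_\sigma$ lies in $\overline N[v]$. So $|A_\sigma|\le b$.
\item Also $\sigma=s(v)\ge\sum_{w\in A_\sigma}\ell(w)\ge|A_\sigma|$.
\end{itemize}
Therefore $|A_\sigma|\le\min(\sigma,b)$ for each $\sigma\in[kb]$. Summing over $\sigma$ gives
\[
  n\le\sum_{\sigma=1}^{kb}\min(\sigma,b)=\frac{b(b+1)}2+(k-1)b^2,
\]
which is exactly the rearranged bound. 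This part is routine once $s(v)$ is introduced.

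\textbf{Sharpness.} I would take $G$ to be a complete multipartite graph, i.e.\ the complement of a disjoint union of cliques, realising every class at its maximum size. The parts are:
\begin{itemize}
\item for each $\sigma\in[b]$, one part of size $\sigma$ with all labels $1$;
\item for each $\sigma\in\{b+1,\ldots,kb\}$, one part of size $b$ whose labels lie in $[k]$ and sum to $\sigma$. This is possible because $b<\sigma\le kb$.
\end{itemize}
In a complete multipartite graph the closed non-neighborhood of $v$ is its own part. So $s(v)$ equals the label sum of the part containing $v$. By construction these sums are $1,\ldots,kb$, all distinct, and any two adjacent vertices lie in different parts. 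Hence the labeling is additive and $\eta(G)\le k$.

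The largest part has size $b$, so $n-\delta=b$ (for $k=1$ the largest part is the one with $\sigma=b$). The order $n$ equals the bound for $k$. It strictly exceeds the bound for $k-1$, since the difference is $b^2>0$. By the inequality already proved, $\eta(G)\ge k$, so $\eta(G)=k$ and equality holds. The degenerate case $b=1$ yields $G=K_k$, which is consistent.

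\textbf{Main obstacle.} The step I expect to be most delicate is the class-size estimate $|A_\sigma|\le\min(\sigma,b)$. It needs both the observation that a class sits inside each member's closed non-neighborhood and the positivity of labels. Everything else is a direct calculation.
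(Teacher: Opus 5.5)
Your proof is correct and follows essentially the same route as the paper. The paper likewise writes $S_\ell(v)=L-q(v)$, with $q(v)$ the label sum over the closed non-neighborhood, and bounds each class $V_j=\{v:q(v)=j\}$ by $\min\{j,n-\delta\}$ using independence and positivity of labels. For sharpness it uses the same complete multipartite graph, with parts of sizes $\min\{j,n-\delta\}$ and label sums $j$, and obtains the matching lower bound by applying the inequality with $a-1$.
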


\begin{proof}
Put $a=\eta(G)$ and $D=n-1-\delta=\Delta(\overline G)$, and fix an
additive labeling $\ell:V(G)\to[a]$.
Let $L=\sum\limits_{v}\ell(v)$,
and for every $v\in V(G)$ define
$$
  q(v)=\ell(v)+\sum_{u\in N_{\overline G}(v)}\ell(u).
$$
Since every vertex other than $v$ is adjacent to $v$ in exactly one
of $G$ and $\overline G$, we have
$$
  S_\ell(v)=L-q(v).
$$
Consequently, if $uv\in E(G)$, then the additive-labeling property gives $q(u)\ne q(v)$.

For every integer $j$, let
$$
  V_j=\{v\in V(G):q(v)=j\}.
$$
The set $V_j$ is independent in $G$, and hence induces a clique in
$\overline G$. Therefore
$$
  |V_j|\le D+1.
$$
Moreover, if $v\in V_j$, then every other vertex of $V_j$ is a neighbor
of $v$ in $\overline G$, and hence
$$
  j=q(v)\ge\sum_{u\in V_j}\ell(u)\ge |V_j|.
$$
Thus
$$
  |V_j|\le\min\{j,D+1\}.
$$
On the other hand, since $\ell(v)\in[a]$ and
$d_{\overline G}(v)\le D$ for every vertex $v$,
$$
  1\le q(v)\le a(D+1).
$$
Hence
$$
  n
  =\sum_j |V_j|
  \le \sum_{j=1}^{D+1}j
  +\sum_{j=D+2}^{a(D+1)}
  (D+1).
$$
Therefore
$$
  n
  \le a(D+1)^2-\frac{D(D+1)}2.
$$
Since $D+1=n-\delta$, rearranging gives
\[
  \eta(G)=a\ge
  \frac{n}{(n-\delta)^2}
  +\frac12-\frac{1}{2(n-\delta)}.
\]

For sharpness, fix $a\ge1$ and $b\ge1$, and put $D=b-1$. Construct a complete multipartite graph with parts
$$
  A_1,\ldots,A_{a(D+1)},
  \qquad
  |A_j|=\min\{j,D+1\}.
$$
Assign labels from $[a]$ to the vertices of $A_j$ so that the total
label in that part is $j$.  This is possible because
$$
  |A_j|\le j\le a|A_j|.
$$
Explicitly, if $j\le D+1$, use label $1$ throughout.  Otherwise, start
from all labels $1$ and distribute $j-(D+1)$ extra units among the
$D+1$ vertices, with at most $a-1$ extra units at each vertex.

If $L$ denotes the total of all labels, each vertex in $A_j$ has
neighborhood sum $L-j$.  Different parts have different sums, while
vertices in the same part are nonadjacent.  Thus $\eta(G)\le a$.

The largest part has size $D+1$, so $n-\delta=D+1=b$.  The total
number of vertices is
$$
  |V(G)|
  =\sum_{j=1}^{D+1}j+(a-1)(D+1)^2
  =a(D+1)^2-\frac{D(D+1)}2.
$$
For $a\ge2$, an additive labeling from $[a-1]$ would contradict the
bound proved above with parameter $a-1$, since it would imply
$$
  |V(G)|
  \le (a-1)(D+1)^2-\frac{D(D+1)}2.
$$
Therefore $\eta(G)=a$.  For $a=1$, the displayed construction already
gives $\eta(G)=1$.
\qed \end{proof}

The extremal construction above does not necessarily preserve a given
induced subgraph.  The following theorem gives an asymptotically extremal
extension that does, while preserving the additive number and the relevant
lower bound on the list additive number.  We will also use this extension
in the hardness reduction.

\begin{theorem}[Dense lift]\label{thm:dense-lift}
Let $H$ be a graph of order $h$ with $\eta(H)=a$. Then $H$ has arbitrarily large
induced extensions $G$ such that
\[
  \eta(G)=a,\qquad
  \eta_\ell(G)\ge\eta_\ell(H), \qquad \text{and } \quad \delta(G)
  =n-\sqrt{\frac{2n}{2a-1}}+O_H(1),
\]
where $n=|V(G)|$. Moreover, the displayed minimum-degree estimate is asymptotically best
possible among graphs with additive number $a$.
\end{theorem}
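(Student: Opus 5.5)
The plan is to take the extremal complete multipartite graph from Theorem~\ref{thm:complement-degree-bound} with a large parameter $b$ and fuse $H$ into it so that $H$ survives as an induced subgraph. Fix an additive labeling $\ell_H:V(H)\to[a]$ and let $D+1=b$ be large compared with $h$ and $a$. We build $G$ from $H$ together with parts $A_1,\ldots,A_{a(D+1)}$ of sizes $\min\{j,D+1\}$, much as in the sharpness construction. The edges are as follows: all edges between distinct parts, no edges inside a part, the edges of $H$ inside $V(H)$, and every edge between $V(H)$ and $\bigcup_j A_j$. The graph $H$ is then induced in $G$, and deleting vertices or shrinking lists cannot lower $\eta_\ell$ when we restrict to $H$. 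More precisely, a bad list assignment for $H$ extends to one for $G$: give the new vertices arbitrary lists. Any additive labeling of $G$ restricts to $H$ with neighbor sums shifted by the common constant $\sum_{A}\ell$, and such a restriction would be additive on $H$. Hence $\eta_\ell(G)\ge\eta_\ell(H)$, since every $H$-vertex is adjacent to all of $A$. The same shift argument gives $\eta(G)\ge\eta(H)=a$.

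For $\eta(G)\le a$, label $H$ by $\ell_H$ and label the parts as in Theorem~\ref{thm:complement-degree-bound}, so that part $A_j$ has total $j$. The neighbor sum of a vertex in $A_j$ is $L-j$, where $L$ is the total label. The neighbor sum of $v\in V(H)$ is $(L-L_H)+S_{\ell_H}(v)$, where $L_H=\sum_{V(H)}\ell_H$. Edges inside $H$ are distinguished because $\ell_H$ is additive. For an $H$–$A$ edge we need $L-j\ne L-L_H+S_{\ell_H}(v)$, that is, $j\ne L_H-S_{\ell_H}(v)$. This is the main obstacle: some part indices $j$ may collide with the values $q_H(v)=L_H-S_{\ell_H}(v)$, which all lie in $[1,ah]$.

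The fix I would try first is to simply omit the parts $A_j$ whose index $j$ lies in the finite set $\{q_H(v)\}$. Alternatively, since $q_H(v)\le ah$, I could shift the indexing so that the parts use values $j>ah$. Omitting at most $ah$ parts, each of size at most $D+1$, costs only $O_H(D)$ vertices. Then
\[
n=a(D+1)^2-\tfrac{D(D+1)}2+O_H(D),
\]
and $\delta(G)=n-(D+1)-O_H(1)$, since a vertex of $H$ has non-neighbors only inside $H$, which is fine. Solving gives $D+1=\sqrt{2n/(2a-1)}+O_H(1)$, hence
\[
\delta=n-\sqrt{\tfrac{2n}{2a-1}}+O_H(1).
\]
To obtain every large $n$ rather than a sparse sequence of values, I would also allow the last few parts to be truncated, which changes $n$ by $O(D)$ and keeps the estimate.

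Optimality follows directly from Theorem~\ref{thm:complement-degree-bound}. Any graph with $\eta=a$ satisfies $n\le a(n-\delta)^2-\tfrac{(n-\delta-1)(n-\delta)}{2}$, so
\[
n-\delta\ge\sqrt{\tfrac{2n}{2a-1}}-O(1).
\]
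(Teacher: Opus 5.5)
Your proposal is correct and matches the paper's proof: the paper uses exactly your second fix, taking parts $A_j$ only for $ah+1\le j\le at$ with $|A_j|=\min\{j,t\}$, so that $q(v)\le ah<j$ separates every $H$--$A_j$ edge, and it uses the same shift identity for $\eta(G)\ge a$ and $\eta_\ell(G)\ge\eta_\ell(H)$ and the same appeal to Theorem~\ref{thm:complement-degree-bound} for optimality. Your estimates are pessimistic but harmless: the omitted parts cost only $O_H(1)$ vertices, and once $D+1>ah$ you get $\delta(G)=n-(D+1)$ exactly.
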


\begin{proof}
Fix $t>ah$.  Take an induced copy of $H$.  For every integer
$j$ with $ah+1\le j\le at$, create an independent set $A_j$ of order
\[
  |A_j|=\min\{j,t\}.
\]
Join every vertex of $H$ to every vertex of each $A_j$, and join every
vertex of $A_i$ to every vertex of $A_j$ whenever $i\ne j$.  Add no
other edges, and call the resulting graph $G$.
The order of $G$ is
\[
  n
  =h+\sum_{j=ah+1}^{t}j+(a-1)t^2
  =h+at^2-\frac{t(t-1)}2-\frac{ah(ah+1)}2.
\]
Every vertex outside $H$ is adjacent to every vertex of $H$.  Thus,
for any labeling $\ell$ of $G$ and any $u,v\in V(H)$,
\[
  S_\ell^G(u)-S_\ell^G(v)
  =
  S_{\ell|_H}^H(u)-S_{\ell|_H}^H(v),
\]
where the superscript indicates the graph in which the neighborhood
sum is taken.  Consequently,
\[
  \eta(G)\ge\eta(H)=a.
\]
If $\eta_\ell(H)>1$, extend a bad
$(\eta_\ell(H)-1)$-list assignment on $H$ by arbitrary lists of the
same size on the new vertices.  An additive labeling from these lists
would restrict, by the displayed identity, to an additive labeling of
$H$, a contradiction.  Hence
\[
  \eta_\ell(G)\ge\eta_\ell(H),
\]
with the case $\eta_\ell(H)=1$ being immediate.

For the reverse inequality, fix an additive labeling
$\ell:V(H)\to[a]$.  For $v\in V(H)$, put
\[
  q(v)=\sum_{x\in V(H)}\ell(x)-S_\ell^H(v).
\]
Then $q(v)\le a|V(H)|=ah$, and $q(u)\ne q(v)$ whenever $uv\in E(H)$.
For every remaining part $A_j$, choose labels from $[a]$ whose sum is
$j$, as in the equality construction of
Theorem~\ref{thm:complement-degree-bound}; this is possible since
\[
  |A_j|\le j\le a|A_j|.
\]
Let $L$ be the sum of all labels in $G$.  Then
\[
  S_\ell^G(v)=L-q(v)
  \quad (v\in V(H)),
  \qquad
  S_\ell^G(x)=L-j
  \quad (x\in A_j).
\]
Edges inside $H$ are distinguished because the values $q(v)$ are
distinct on adjacent vertices.  An edge between $H$ and $A_j$ is
distinguished because
\[
  q(v)\le ah<j,
\]
and vertices in distinct parts $A_i$ and $A_j$ have different neighborhood
sums because $i\ne j$.  Hence $\eta(G)\le a$, and therefore
$\eta(G)=a$.

Finally, in $\overline G$ the remaining parts induce disjoint cliques,
while $\overline H$ is another component.  Every such component has
order at most $t$, and $A_t$ has order exactly $t$.  Thus
\[
  \Delta(\overline G)=t-1,
\]
so $\delta(G)=n-t$.

From the exact order formula,
\[
  n=\frac{2a-1}{2}t^2+O_H(t),
\]
and therefore
\[
  t=\sqrt{\frac{2n}{2a-1}}+O_H(1).
\]
Hence
\[
  \delta(G)
  =n-\sqrt{\frac{2n}{2a-1}}+O_H(1).
\]
Since $t$ can be arbitrarily large, so can $n$.  The asymptotic
optimality follows from Theorem~\ref{thm:complement-degree-bound}.
\qed
\end{proof}

\section{Hardness of approximation on almost complete graphs}\label{sec:hardness}
In this section, we study the computational complexity of determining $\eta(G)$ and, as in Section~\ref{2}, establish an arbitrarily large gap. Our reduction uses the following consequence of the proof of
Zuckerman's inapproximability theorem for the chromatic
number~\cite{Zuc07}: for every fixed $\varepsilon>0$, it is NP-hard,
on sufficiently large $N$-vertex graphs $F_0$, to distinguish between
$\chi(F_0)\le N^\varepsilon$ and
$\chi(F_0)>N^{1-\varepsilon}$.

\begin{theorem}\label{thm:fixed-lucky-gap}
For every fixed integer $k\ge2$, it is NP-hard to distinguish graphs
$G$ satisfying $\eta(G)=2$ from graphs satisfying
$\eta(G)>k$, even when
\[
  \delta(G)
  =n-\sqrt{\frac{2n}{3}}+O(1).
\]
For the yes-instances, the displayed minimum-degree estimate is
asymptotically optimal.
\end{theorem}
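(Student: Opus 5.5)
The plan is to reduce from the chromatic-number gap of Zuckerman~\cite{Zuc07} to a gap instance $H$ with $\eta(H)\le 2$ versus $\eta(H)>k$. Then Theorem~\ref{thm:dense-lift} is applied to $H$ with $a=2$. That theorem preserves $\eta$ exactly, gives $\delta(G)=n-\sqrt{2n/3}+O_H(1)$, and its construction is clearly polynomial. The asymptotic optimality for yes-instances is then immediate from Theorem~\ref{thm:complement-degree-bound} with $a=2$. To guarantee $\eta(H)\ge 2$ in the yes case, I will add a disjoint $K_2$, since the additive number of a disjoint union is the maximum over the parts. So everything reduces to producing $H$ from the input $F_0$. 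In this step I must keep track of $O_H(1)$, since $H$ grows with the instance. I will therefore check that the error term in Theorem~\ref{thm:dense-lift} is uniform once $t$ is chosen polynomially larger than $ah$; this makes the bound $O(1)$ relative to $\sqrt n$ in the normalisation used.

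\emph{Analysis tool for the no-case.} I will use the mechanism from the proof of Theorem~\ref{thm:complement-degree-bound}. For an additive labeling $\ell:V(H)\to[a]$, put $q(v)=\ell(v)+\sum_{u\in N_{\overline H}(v)}\ell(u)$. Then $q$ is a proper colouring of $H$, each colour class is a clique of $\overline H$, and $q(v)\le a(d_{\overline H}(v)+1)$. Consequently
\[
  |V(H)|\le \sum_{j\le a(\Delta(\overline H)+1)}\min\{j,\omega(\overline H)\}
  \le a(\Delta(\overline H)+1)\,\omega(\overline H).
\]
So I want $\overline H$ to be built from a graph $Z$ related to $F_0$ with the following two properties. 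In the no-case, $\omega(\overline H)$ is tiny, for example $\omega(\overline H)\le\alpha(F_0)<N^{\varepsilon}$, using $\alpha(F_0)\le N/\chi(F_0)$. And $\Delta(\overline H)$ is at most $|V(H)|/(k\,N^{\varepsilon})$. The natural first choice is $\overline H=\overline{F_0}$, that is, $H=F_0$. Its colour classes of $q$ are independent sets of $F_0$. In the yes-case, a colouring of $F_0$ with $N^\varepsilon$ classes should be realised by labels in $\{1,2\}$ so that $q$ is constant on each class and distinct across classes; this parallels the equality construction of Theorem~\ref{thm:complement-degree-bound}.

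\emph{Main obstacle.} The hard step is the degree control: the counting bound only bites when $\Delta(\overline H)$ is small relative to $|V(H)|/\alpha$. For a raw Zuckerman instance the complement degrees can be of order $N$, which kills the count. I would first try to amplify $F_0$ by a product that multiplies the order much faster than the complement degree while keeping $\alpha$ controlled. The candidate is to join many vertex-disjoint copies of $F_0$, possibly with blow-ups of its vertices into cliques. Joining copies leaves complement degrees unchanged while the order grows, and a clique of $\overline H$ stays inside one copy, so $\omega(\overline H)\le\alpha(F_0)$. Choosing the number of copies polynomially large then forces $a>k$ in the no-case. In the yes-case I would label each copy from $\{1,2\}$ so that the $q$-values across copies are offset. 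Here $q$ only sees the complement, which lies inside a copy, so offsets must come from the label multiset, as in the selector trick of Theorem~\ref{thm:list-gap}. Verifying that two labels suffice to separate the $N^{\varepsilon}$ colour classes inside each copy is the step I expect to need the most care. My fallback is to attach to each colour class a private block of label-$2$ vertices in $\overline H$-cliques that shifts its $q$-value.
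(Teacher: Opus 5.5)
Your outer frame matches the paper's: build a seed $H$ with $\eta(H)\le 2$ versus $\eta(H)>k$, then apply the dense-lift construction with $t$ polynomially large in $h$. In the no-case you may use only the construction and its restriction identity, not Theorem~\ref{thm:dense-lift} itself, since its hypothesis $\eta(H)=2$ fails there. Your no-case tool is also sound. $q$ properly colours $H$ with values in $[a(\Delta(\overline H)+1)]$, so $\chi(H)\le\eta(H)(\Delta(\overline H)+1)$, and a join of $m\ge kN^{\varepsilon}$ copies of $F_0$ does have $\eta>k$ in the no-case.

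The genuine gap is the yes-case, which is where the reduction's content lies. If $H$ is $F_0$ or a join of copies of $F_0$, then for $u,v$ in one copy $S^H_\ell(u)-S^H_\ell(v)=S^{F_0}_\ell(u)-S^{F_0}_\ell(v)$; this is the same identity as in the dense lift. So $\eta(H)\ge\eta(F_0)$, and you would need $\eta(F_0)\le 2$. Nothing in $\chi(F_0)\le N^{\varepsilon}$ gives that: three pairwise adjacent true twins already need three distinct labels, and blowing vertices up into cliques only creates more such twins. Your hope that $q$ is constant on colour classes comes from the complete multipartite extremal graph, where each vertex's complement neighbourhood is its own class. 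For general $F_0$, $q(v)$ depends on all non-neighbours of $v$. The fallback of attaching a block to each colour class is not a valid reduction either. The graph must be computed from $F_0$ alone, and the colouring is exactly what is hard to find, so colour information can live only in the labeling.

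That is the missing idea, and the paper supplies it with a gadget.
\begin{itemize}
\item It adds a universal vertex to get $F$ and sets $a=\lceil N^{\varepsilon}\rceil+1$.
\item For each $v\in V(F)$ and layer $p\in[k]$ it creates a block $B_{v,p}$ of $a-1$ vertices. The block's label sum $(a-1)+\rho(v)$ encodes a colour $\rho(v)\in\{0,\dots,a-1\}$ through the number of $2$'s.
\item For each oriented edge $uv$ it creates a clique $c^0_{uv},\dots,c^k_{uv}$, with $c^j_{uv}$ joined to $B_{u,1},\dots,B_{u,j}$ and $B_{v,j+1},\dots,B_{v,k}$.
\item It attaches $ka$ private leaves, labelled $2$, to every block vertex.
\end{itemize}
In the yes-case the block contribution at $c^j_{uv}$ is $k(a-1+\rho(v))+j(\rho(u)-\rho(v))$, which is distinct over $j$. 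The leaves lift every block vertex's sum to at least $2ka$, above all clique sums.

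In the no-case, let $\sigma(v)=(\sigma_1(v),\dots,\sigma_k(v))$ be the vector of block sums. If $\sigma(u)=\sigma(v)$, all $k+1$ clique vertices get the same block contribution. Two of them share a label from $[k]$, so their neighbourhood sums coincide, a contradiction. Hence $\sigma$ properly colours $F$ with at most $(ka)^k<N^{1-\varepsilon}$ colours. Soundness therefore comes from this pigeonhole argument, not from density counting; density is added only afterwards by the lift.
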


\begin{proof}
Fix $k\ge2$, and choose a constant $\varepsilon>0$ so small that $\varepsilon k<1-\varepsilon$.
Let $F_0$ be an $N$-vertex instance of the coloring gap, and let $F$ be obtained by adding one universal vertex and joining it to all vertices of $F_0$.  Then $F$ is connected and \[\chi(F)=\chi(F_0)+1.\]
Put $a=\lceil N^\varepsilon\rceil+1$. We construct a graph $H=H(F,a,k)$.

This graph has $k$ layers. For each $v\in V(F)$ and each layer $p\in[k]$, create a set $B_{v,p}$ of $a-1$ vertices, called \emph{blocks}.  Fix an arbitrary orientation of every edge of $F$, and write $uv$ when the edge is oriented from $u$ to $v$.  For every edge $uv\in E(F)$, create a clique
\[
  C_{uv}=\{c^j_{uv}\,|\,0\le j\le k\}
\]
of order $k+1$.  For $0\le j\le k$, join $c^j_{uv}$ to every vertex in
\[
  B_{u,1}\cup\cdots\cup B_{u,j}
 \,\,\,\, \qquad\text{and}\qquad\,\,\,\,
  B_{v,j+1}\cup\cdots\cup B_{v,k}.
\]
Finally, attach $R=ka$
private leaves to every block vertex.  Since $k$ is fixed and $a=O(N^\varepsilon)$, the construction has polynomial size.

Suppose first that $\chi(F)\le a$, and fix a proper coloring
\[
  \rho:V(F)\to\{0, 1, ..., a-1\}.
\]
In every block $B_{v,p}$, label exactly $\rho(v)$ vertices by $2$ and the remaining vertices by $1$.  Thus the sum of the labels in each such block is $(a-1)+\rho(v)$.
Label every clique vertex by $1$ and every private leaf by $2$.  For $uv\in E(F)$, the contribution of the block vertices to the neighborhood sum of $c^j_{uv}$ is
\[
\begin{aligned}
&j\bigl((a-1)+\rho(u)\bigr)
 +(k-j)\bigl((a-1)+\rho(v)\bigr)\\
={}& k(a-1+\rho(v))
 +j\bigl(\rho(u)-\rho(v)\bigr).
\end{aligned}
\]
These $k+1$ quantities are pairwise distinct because $\rho(u)\ne\rho(v)$.  Since every clique vertex has $k$ neighbors in $C_{uv}$, all labeled $1$, we have
\[
S(c^j_{uv})
=k(a+\rho(v))+j\bigl(\rho(u)-\rho(v)\bigr)
\le 2ka-k
<2ka.
\]
On the other hand, every block vertex has $R=ka$ private leaves, all labeled $2$, and hence has neighborhood sum at least $2R=2ka$.
Thus every edge between a block vertex and a clique vertex is distinguished.  Finally, the neighborhood sum of every private leaf is the label of its parent and hence belongs to $\{1,2\}$, whereas the neighborhood sum of its parent is at least $2ka>2$.  Thus every leaf edge is also distinguished. Hence $\eta(H)\le2$.
Every vertex of a clique $C_{uv}$ has degree
$k+k(a-1)=ka$.  Thus the all-one labeling fails on every clique edge; consequently
\[
\eta(H)=2.
\]
For soundness, suppose that $\eta(H)\le k$, and fix an additive labeling
$\ell:V(H)\to[k]$.  For every $v\in V(F)$ and $p\in[k]$, define
\[
  \sigma_p(v)=\sum_{x\in B_{v,p}}\ell(x),
  \qquad
  \sigma(v)=\bigl(\sigma_1(v),\ldots,\sigma_k(v)\bigr).
\]
We claim that $\sigma$ is a proper coloring of $F$.  Suppose instead
that $uv\in E(F)$ and $\sigma(u)=\sigma(v)$.  For every
$0\le j\le k$, the contribution of the block neighbors to the
neighborhood sum of $c^j_{uv}$ is
\[
  \sum_{p\le j}\sigma_p(u)
  +\sum_{p>j}\sigma_p(v)
  =\sum_{p=1}^k\sigma_p(u),
\]
which is independent of $j$.  Among the $k+1$ vertices of $C_{uv}$,
two receive the same label from $[k]$.  Their internal clique
contributions are therefore equal, and the displayed identity shows
that their block contributions are equal as well.  Hence their
neighborhood sums are equal, a contradiction.

Thus $\sigma$ properly colors $F$.  Each $\sigma_p(v)$ is the sum of
$a-1$ labels from $[k]$, and therefore has at most
\[
  (k-1)(a-1)+1
\]
possible values.  Consequently,
\[
  \chi(F)
  \le \bigl((k-1)(a-1)+1\bigr)^k
  \le (ka)^k.
\]
For all sufficiently large $N$, since
$a=\lceil N^\varepsilon\rceil+1$ and $\varepsilon k<1-\varepsilon$,
we have
\[
  (ka)^k<N^{1-\varepsilon}.
\]
Hence in the no-case of the chromatic gap the constructed graph cannot
satisfy $\eta(H)\le k$.  Therefore $\eta(H)>k$.

It remains to impose the dense structural restrictions.  Put
$h=|V(H)|$, and apply the extension construction used in the proof of
Theorem~\ref{thm:dense-lift}, with the formal parameter $a=2$ and
$t=2h^2$.  Only the construction, rather than the conclusion of
Theorem~\ref{thm:dense-lift}, is used here: it is defined for every seed
graph $H$.  Independently of the value of $\eta(H)$, the
resulting graph $G$ contains $H$ as an induced subgraph and satisfies
\[
  S_\ell^G(u)-S_\ell^G(v)
  =
  S_{\ell|_H}^H(u)-S_{\ell|_H}^H(v)
  \qquad (u,v\in V(H))
\]
for every labeling $\ell$ of $G$.  Consequently,
\[
  \eta(G)\ge\eta(H).
\]
The order and minimum degree calculations in that construction are also
independent of $\eta(H)$.  Since $t=2h^2$, they give
\[
  n
  =
  h+\sum_{j=2h+1}^{t}j+t^2
  =
  \frac32t^2-\frac12t
\]
and $\delta(G)=n-t$.
Therefore
\[
  t=\sqrt{\frac{2n}{3}}+O(1),
  \qquad
  \delta(G)
  =
  n-\sqrt{\frac{2n}{3}}+O(1).
\]
In the yes-case, $\eta(H)=2$, and the labeling argument in the proof of
Theorem~\ref{thm:dense-lift} gives $\eta(G)\le2$; hence
$\eta(G)=2$.  In the no-case, the restriction property above gives
$\eta(G)\ge\eta(H)>k$.  Finally, since $t=O(h^2)$ and the extension has
order $O(t^2)$, the construction remains polynomial.  The asymptotic
optimality in the yes-case follows from
Theorem~\ref{thm:complement-degree-bound}.  The theorem follows.
\qed \end{proof}

\begin{corollary}\label{cor:no-constant-approximation}
Unless $\mathrm{P}=\mathrm{NP}$, $\eta(G)$ admits no polynomial-time
constant-factor approximation, even when restricted to graphs with $\delta(G)=n-O(\sqrt n)$.
\end{corollary}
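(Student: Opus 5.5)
The plan is a standard gap-amplification argument built on Theorem~\ref{thm:fixed-lucky-gap}. Suppose, for contradiction, that some polynomial-time algorithm $\mathcal A$ and constant $c\ge1$ satisfy
\[
  \eta(G)\le \mathcal A(G)\le c\,\eta(G)
\]
for every input $G$ in the restricted class. We will take $\mathcal A(G)$ to be the value returned, possibly together with a labeling witnessing an upper bound. A factor-$c$ guarantee relative to a witness labeling gives the same inequality, so this case needs no separate treatment.

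Choose the fixed integer $k=\lceil 2c\rceil\ge2$, so that $2c\le k$, and apply Theorem~\ref{thm:fixed-lucky-gap} with this $k$. On a yes-instance, where $\eta(G)=2$, the algorithm returns a value at most $2c\le k$. On a no-instance, where $\eta(G)>k$, it returns a value at least $\eta(G)>k$. Therefore the test ``$\mathcal A(G)\le k$'' decides the promise problem of Theorem~\ref{thm:fixed-lucky-gap} in polynomial time. That problem is NP-hard, so $\mathrm P=\mathrm{NP}$.

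It remains to check the density restriction. The instances produced in Theorem~\ref{thm:fixed-lucky-gap} satisfy
\[
  \delta(G)=n-\sqrt{2n/3}+O(1)=n-O(\sqrt n).
\]
Hence the hardness already holds within the class $\delta(G)=n-O(\sqrt n)$, and the argument above, run on that class, gives the restricted statement.

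The only point needing care is that the constant in $O(\sqrt n)$ must not depend on $c$. It does not: the density estimate in Theorem~\ref{thm:fixed-lucky-gap} has leading term $\sqrt{2n/3}$, which is independent of $k$, and only the $O(1)$ term could vary with $k$, whereas the class $\delta(G)=n-O(\sqrt n)$ allows any constant multiple of $\sqrt n$. So a single class works for every approximation ratio $c$ at once. Beyond this the proof is a routine application of the theorem.
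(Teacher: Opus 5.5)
Your proposal is correct and follows the paper's argument: you fix $k$ with $2c\le k$ (the paper takes $k>2C$), so a $c$-approximate value separates $\eta(G)=2$ from $\eta(G)>k$ in Theorem~\ref{thm:fixed-lucky-gap}, and the density restriction is inherited from the instances that theorem produces. Your extra check that a single class $\delta(G)=n-O(\sqrt n)$ serves every ratio $c$ simultaneously is valid and is left implicit in the paper.
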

\begin{proof}
A $C$-approximation returns an additive labeling whose largest label is
at most $C\eta(G)$. Suppose that a polynomial-time $C$-approximation exists for some constant
$C\ge1$. Choose an integer $k>2C$.  On a yes-instance, the
largest label returned by the algorithm is at most $2C<k$, whereas on a
no-instance every feasible additive labeling has largest label greater
than $k$.  Thus the algorithm would distinguish the two cases in
Theorem~\ref{thm:fixed-lucky-gap}.
\qed \end{proof}

\section*{Declaration of Generative AI and AI-assisted Technologies}

During the development of this work, the authors used OpenAI's ChatGPT as an assistive tool for brainstorming ideas, exploring proof strategies, and checking arguments for possible gaps or errors.  All mathematical statements, proofs, and conclusions were independently reviewed and verified by the authors.  The authors take full responsibility for the accuracy, originality, and integrity of the work.

\end{document}